\newtheorem{thm}{Theorem}[section]
\newcommand{\bt}{\begin{thm}}
\newcommand{\et}{\end{thm}}
\newtheorem{cor}[thm]{Corollary} 
\newcommand{\bc}{\begin{cor}}
\newcommand{\ec}{\end{cor}}
\newtheorem{lem}[thm]{Lemma}  
\newcommand{\bl}{\begin{lem}}
\newcommand{\el}{\end{lem}}
\newtheorem{prop}[thm]{Proposition}
\newcommand{\bp}{\begin{prop}}
\newcommand{\ep}{\end{prop}}
\newtheorem{defn}[thm]{Definition}
\newcommand{\bd}{\begin{defn}}  
\newcommand{\ed}{\end{defn}}
\newtheorem{rmrk}[thm]{Remark}
\newcommand{\br}{\begin{rmrk}}
\newcommand{\er}{\end{rmrk}}
\newcommand{\be}{\begin{equation}}
\newcommand{\ee}{\end{equation}}
\newcommand{\R}{\mathbb{R}}
\newcommand{\tr}{\operatorname{tr}}
\begin{document}
\title[Differential Harnack Estimates for heat Equation under Finsler-Ricci Flow]{Differential Harnack Estimates for Positive Solutions to Heat Equation under Finsler-Ricci Flow}

\author{Sajjad Lakzian}
\address[Sajjad Lakzian]{HCM, Universit\"{a}t Bonn}
\email{lakzians@gmail.com}

\thanks{\textit{The author is supported by the Hausdorff postdoctoral fellowship at the Hausdorff Center for Mathematics, University of Bonn, Germany}}

\subjclass[2010]{35K55(primary), and 53C21(secondary)} 
 \keywords{Finsler Ricci Flow, Differential Harnack, Gradient Estimate, Weighted Ricci Curvature, Heat Equation, Curvature-Dimension}

\begin{abstract}
In this paper we prove first order differential Harnack estimates for positive solutions of the heat equation (in the sense of distributions) under closed Finsler-Ricci flows. We assume suitable Ricci curvature bounds throughout the flow and also assume that $\mathbf{S}-$curvature vanishes along the flow. One of the key tools we use is the Bochner identity for Finsler structures proved by Ohta-Sturm~\cite{Ohta-Sturm-Bochner}.
\end{abstract}

\maketitle






\section{Introduction}

In the past few decades, geometric flows and more notably among them, Ricci flow, have proved very useful in attacking long standing geometry and topology questions. One important application is finding the so called round (of constant curvature, Einstein , Solitons, .. etc.) metrics on manifolds by homogenizing a given initial metric.

There is also a hope that similar methods can be applied in the Finsler setting. One might hope to  find an answer for, for instance, Professor Chern's question about the existence of Finsler-Einstein metrics on every smooth manifolds by using a suitable geometric flow resembling the Ricci flow.

In the Finsler setting, there are notions of Ricci and sectional curvatures and Bao~\cite{Bao-curv-problems} has proposed an evolution of Finsler structures that in essence shares a great resemblance with the Ricci flow of Riemannian metrics. The flow Bao suggests is $\frac{\partial F^2}{\partial t} = -2F^2 R$ where $R = \frac{1}{F^2} Ric$. In terms of the symmetric metric tensor associated to $F$ and Akbarzadeh's Ricci tensor, this flow takes the form of $\frac{\partial g_{ij}}{\partial t} = -2Ric_{ij}$ which is the familiar Ricci flow. 

The notion of Finsler-Ricci flow is very recent and very little has been done about it. Some partial results regarding the existence and uniqueness of such flows are obtained in~\cite{RAZAVI-FRF}. Also, the solitons of this flow have been studied in \cite{Bidabad-Yarahmadi}. Our focus in these notes will be to consider a positive solution of the heat equation (in the sense of distributions) under Finsler-Ricci flow and prove first order differential Harnack estimates that are similar to those in the Riemannian case (see~\cite{LIU-Grad-Est} ~\cite{SUN-Grad-Est}). The key tools we have used here is the Bochner identity for Finsler metrics (point-wise and in the sense of distributions) proven by Ohta-Sturm ~\cite{Ohta-Sturm-Bochner} and as is customary in such estimates, the Maximum principle. 

We should mention that, in this paper, we are not dealing with the existence and Sobolev regularity of such solutions (which is very important and extremely delicate, for example, in the static case, solutions will be $C^2$ iff the structure is Riemannian). For existence and the regularity in the static case see Ohta-Sturm~\cite{Ohta-Sturm-Heat}. Our main theorem is the following
\begin{thm}\label{thm-main}
  Let $\left( M^n , F(t) \right) , t \in [0,T]$ be a closed Finsler-Ricci flow. Suppose there is a real number $K \in \R$ and positive real numbers $K_1$ and $K_2$ such that for all $t \in [0,T]$,\\
  
\noindent \textbf{(i)} $- K_1  \le \left( Ric_{ij}(\mathbf{v}) \right)_{i,j=1}^n \le K_2 $ as quadratic forms on $T_x M$ for all $v \in T_x M \setminus \{0\}$,
in any coordinate system, $\{\frac{\partial}{\partial x_i}\}$, that is orthonormal with respect to $g_{\mathbf{v}}$. \\
\noindent \textbf{(ii)} $\mathbf{S}-$curvature vanishes (see Section~\ref{sec:S-curv});\\
 
Let $u(x,t) \in L^2 \left([0,T] , H^1(M) \right) \cap H^1 \left([0,T] , H^{-1}(M)  \right)$ be a positive \textbf{global solution} (in the sense of distributions) of the heat equation under FRF, i.e. for any test function, $\phi \in C^\infty(M)$, and for all $t \in [0,T]$,
\be
	\int	_M \phi \partial_t u(t,.) \; dm =  - \int_M D\phi \left( \nabla u(t,.)  \right) \; dmdt ;
\ee
Then, $u$ satisfies,
\be
		 F^2\left( \nabla \left( \log u \right) (t,x)\right) - \theta \partial_t \left( \log u \right)(t,x) \le \frac{n \theta^2}{t} + \frac{n\theta^3 C_1}{\theta -1} + n^{3/2}\theta^2 \sqrt{C_2},
\ee
for any $\theta>1$ and where,
\be
C_1 = K_1   \;\; and \;\; C_2 = \max\{K_1^2 , K_2^2\}.
\ee
\end{thm}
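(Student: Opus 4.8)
The plan is to run a Li--Yau/Hamilton type argument adapted to the Finsler--Ricci flow, with the Ohta--Sturm Bochner identity playing the role of the Riemannian Weitzenb\"ock formula. Write $f:=\log u$. On the open set $\{\nabla u\neq 0\}$ the chain rule for the nonlinear Finsler Laplacian (and, across the degenerate locus, its distributional version) converts $\partial_t u=\Delta u$ into
\be
	\partial_t f=\Delta^{\nabla f}f+F^2(\nabla f),
\ee
where $\Delta^{\nabla f}$ is the linearized weighted Laplacian with reference vector field $\nabla f$. Because $\mathbf{S}\equiv 0$ along the flow, $\Delta^{\nabla f}$ is just $\tr_{g_{\nabla f}}\nabla^2$ and the Ohta--Sturm weighted Ricci curvature coincides with $\Ric$ for every effective dimension, so hypothesis \textbf{(i)} provides curvature bounds with effective dimension $n$. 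The object to track is the Harnack quantity
\be
	H:=t\left(F^2(\nabla f)-\theta\,\partial_t f\right)=t\left((1-\theta)F^2(\nabla f)-\theta\,\Delta^{\nabla f}f\right),\qquad \theta>1,
\ee
for which one wants a parabolic differential inequality amenable to a (weak) maximum principle.

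The heart of the matter is the computation of $(\partial_t-\Delta^{\nabla f})H$. For the spatial part I would invoke the Ohta--Sturm Bochner identity (pointwise on $\{\nabla f\neq 0\}$, distributionally in general),
\be
	\Delta^{\nabla f}\!\left(\tfrac12 F^2(\nabla f)\right)=D\!\left(\Delta^{\nabla f}f\right)(\nabla f)+\Ric(\nabla f)+\|\nabla^2 f\|_{g_{\nabla f}}^2 ,
\ee
together with the trace inequality $\|\nabla^2 f\|_{g_{\nabla f}}^2\ge\tfrac1n(\Delta^{\nabla f}f)^2$ and, from \textbf{(i)}, $\Ric(\nabla f)\ge -K_1 F^2(\nabla f)$. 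For the time part, $\partial_t(F^2(\nabla f))$ contributes $2D(\partial_t f)(\nabla f)$ together with the effect of the deforming cometric; since $\partial_t g_{ij}=-2\Ric_{ij}$ this effect is $2\Ric(\nabla f)$, which cancels exactly against the $-2\Ric(\nabla f)$ issuing from Bochner, so $(\partial_t-\Delta^{\nabla f})(F^2(\nabla f))$ has no residual zeroth-order curvature term. What does survive, via the $-\theta\,\partial_t f$ part of $H$, is a term $-2\theta\Ric(\nabla f)\le 2\theta K_1 F^2(\nabla f)$ and the commutator $(\partial_t\Delta^{\nabla f})f$, which along the flow equals $2\langle\Ric,\nabla^2 f\rangle_{g_{\nabla f}}$ up to Finsler corrections controlled by $\mathbf{S}\equiv 0$, hence is bounded in absolute value by $\sqrt n\max\{K_1,K_2\}\,\|\nabla^2 f\|_{g_{\nabla f}}$. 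Substituting $\partial_t f=\Delta^{\nabla f}f+F^2(\nabla f)$ and collecting, one arrives schematically at
\be
	(\partial_t-\Delta^{\nabla f})H\ \le\ \tfrac1t H-2t\,\|\nabla^2 f\|_{g_{\nabla f}}^2+2\theta t K_1 F^2(\nabla f)+2\theta t\sqrt n\max\{K_1,K_2\}\,\|\nabla^2 f\|_{g_{\nabla f}}+(\text{a multiple of }DH(\nabla f)) .
\ee

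Next I would apply the (weak) parabolic maximum principle on $M\times(0,T]$. At a positive maximum of $H$ the first-order term vanishes, $\Delta^{\nabla f}H\le 0$ and $\partial_t H\ge 0$, so the right-hand side above is $\ge 0$ there. Writing $y:=F^2(\nabla f)\ge 0$, eliminating $\Delta^{\nabla f}f$ through $H/t=(1-\theta)y-\theta\Delta^{\nabla f}f$, and using $\|\nabla^2 f\|_{g_{\nabla f}}^2\ge\tfrac1{n\theta^2}\big((\theta-1)y+H/t\big)^2$, the estimate becomes a polynomial inequality in $(H,y)$ in which the decisive negative term is $-\tfrac{2t}{n\theta^2}\big((\theta-1)y+H/t\big)^2$. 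Its $H^2/t$ part, balanced against $H/t$, yields the summand $\tfrac{n\theta^2}{t}$; its negative quartic $\propto -\tfrac{t(\theta-1)^2}{n\theta^2}\,y^2$ absorbs $2\theta tK_1 y$ by Young's inequality and, keeping rather than discarding the mixed term $\propto\tfrac{t(\theta-1)}{n\theta^2}\,y\,(H/t)$, produces the summand $\tfrac{n\theta^3 C_1}{\theta-1}$, linear in $K_1$, with $\theta>1$ exactly what is needed for this coefficient to be finite and for the quartic to be genuinely negative; finally Young's inequality applied to $2\theta t\sqrt n\max\{K_1,K_2\}\,\|\nabla^2 f\|_{g_{\nabla f}}$ against part of $2t\,\|\nabla^2 f\|_{g_{\nabla f}}^2$ gives the summand $n^{3/2}\theta^2\sqrt{C_2}$ with $\sqrt{C_2}=\max\{K_1,K_2\}$. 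Solving the resulting quadratic inequality for the maximal value of $H$ gives $H\le n\theta^2+\big(\tfrac{n\theta^3 C_1}{\theta-1}+n^{3/2}\theta^2\sqrt{C_2}\big)t$, and dividing by $t$ is the claimed estimate with $C_1=K_1$ and $C_2=\max\{K_1^2,K_2^2\}$.

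The step I expect to be the main obstacle is the low regularity of $u$. Since $u$ is only a distributional solution lying in $L^2([0,T],H^1(M))\cap H^1([0,T],H^{-1}(M))$ --- and, in the Finsler setting, solutions are generally not $C^2$ --- the nonlinear Laplacian is genuinely meaningful only on $\{\nabla f\neq 0\}$, so the Bochner identity and the maximum principle must both be executed in Ohta--Sturm's distributional framework, with the ``frozen-direction'' linearization of $\Delta^{\nabla f}$ and careful handling of the degenerate locus and of the limit $t\downarrow 0$. A secondary, genuinely Finslerian difficulty is the bookkeeping in $\partial_t(F^2(\nabla f))$ and in the commutator $(\partial_t\Delta^{\nabla f})f$: the fundamental tensor $g_{\nabla f}$ depends on the direction, which itself evolves along the flow, so one must verify that the Cartan-tensor cross terms thus generated are controlled --- this is precisely what the hypothesis $\mathbf{S}\equiv 0$ is there to guarantee.
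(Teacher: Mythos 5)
Your proposal follows essentially the same route as the paper: the Harnack quantity $\alpha = t\big(F^2(\nabla f) - \theta\,\partial_t f\big)$, the evolution of the Legendre transform and of $F^2(\nabla f)$ under the flow, the Ohta--Sturm Bochner formula with $\mathbf{S}\equiv 0$ forcing $\Ric_\infty = \Ric$, Young's inequality together with the trace bound $\|\nabla^2 f\|^2 \ge \tfrac1n(\Delta f)^2$, and the parabolic maximum principle closed by the quadratic formula. One detail you make explicit that the paper leaves implicit is that $2t\,\Ric(\nabla f) - 2t\,\Ric^{ij}f_if_j$ vanishes \emph{exactly} by Euler's theorem (two-homogeneity of the Ricci scalar), which is the clean justification for the corresponding cancellation in the paper's lower bound for $\mathbf{B}$.
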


\begin{rmrk}
Our results can be applied to any Finsler-Ricci flow of Berwald metrics on closed manifolds, since the $\mathbf{S}-$curvature vanishes for Berwald metrics (for example see~\cite{Ohta-vanishing}).
\end{rmrk}

We will note that it might be possible to obtain stronger results with less curvature bound conditions by using different methods such as Nash-Moser iteration (as is done for harmonic functions in static case by Xia~\cite{Xia-grad-est}). 

Integrating the differential Harnack inequalities, in an standard manner, lead to the Harnack type inequalities, indeed,
\begin{cor}\label{cor-global}
	Let $\left( M , F(t) \right) , t \in [0,T]$ be as in Theorem~\ref{thm-main}, then for any two points $(x , t_1) , (y , t_2) \in M \times (0, T]$ with $t_1 < t_2$, we will get,
	\be
		u(x,t_1) \le u(y,t_2) \left(\frac{t_2}{t_1}  \right)^{ 2n\epsilon}  \exp \left\{ \int_0^1 \frac{\epsilon F^2\left( \gamma'(s) \right)|_\tau}{2(t_2 - t_1)}ds + C(n,\epsilon) (t_2 - t_1)\left( C_1 + \sqrt{C_2}\right) \right\},
	\ee
\end{cor}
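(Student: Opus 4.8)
The plan is to integrate the pointwise differential Harnack inequality of \thmref{thm-main} along a space--time curve, in the spirit of the classical Li--Yau argument, while keeping careful track of the Finsler (non)reversibility. First I would rewrite the conclusion of \thmref{thm-main} as a lower bound on the time derivative,
\[
\partial_t(\log u)(t,x)\ \ge\ \frac{1}{\theta}\Big(F^2\big(\nabla\log u\big)(t,x)-\Phi(t)\Big),\qquad \Phi(t):=\frac{n\theta^2}{t}+\frac{n\theta^3C_1}{\theta-1}+n^{3/2}\theta^2\sqrt{C_2}.
\]
Then, given $(x,t_1)$ and $(y,t_2)$ with $t_1<t_2$, I fix any smooth curve $\gamma\colon[0,1]\to M$ with $\gamma(0)=y$, $\gamma(1)=x$, and consider the space--time path $\eta(s)=(\gamma(s),\tau(s))$ with the affine time change $\tau(s)=(1-s)t_2+st_1$, so that $\tau'(s)=-(t_2-t_1)<0$ and $\eta$ runs from $(y,t_2)$ to $(x,t_1)$.

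Differentiating along $\eta$ gives $\tfrac{d}{ds}\log u(\eta(s))=D(\log u)(\gamma'(s))\big|_{\tau(s)}+\tau'(s)\,\partial_t(\log u)(\eta(s))$. For the spatial term I would invoke the Finsler Cauchy--Schwarz inequality $Df(v)\le F^*(Df)\,F(v)=F(\nabla f)\,F(v)$; this is exactly the place where irreversibility enters, so one must use $F(\gamma'(s))$ in the direction of increasing $s$ and evaluate it in the time slice $\tau(s)$ --- which is the meaning of $F^2(\gamma'(s))|_\tau$ in the statement. For the time term, since $\tau'(s)<0$, substituting the displayed lower bound for $\partial_t\log u$ reverses the inequality; combining the two and then completing the square in $a=F(\nabla\log u)$ (using $ab-ca^2\le b^2/4c$ with $c=(t_2-t_1)/\theta$) kills the gradient term and yields
\[
\frac{d}{ds}\log u(\eta(s))\ \le\ \frac{\theta\,F^2(\gamma'(s))|_{\tau(s)}}{4(t_2-t_1)}+\frac{t_2-t_1}{\theta}\,\Phi(\tau(s)).
\]

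It then remains to integrate over $s\in[0,1]$. The fundamental theorem of calculus turns the left side into $\log u(x,t_1)-\log u(y,t_2)$, while the only nonconstant term on the right, $\int_0^1\Phi(\tau(s))\,ds$, reduces via the substitution $w=\tau(s)$ to $\int_0^1 \tau(s)^{-1}\,ds=(t_2-t_1)^{-1}\log(t_2/t_1)$. This produces
\[
\log u(x,t_1)-\log u(y,t_2)\ \le\ \frac{\theta}{4(t_2-t_1)}\int_0^1 F^2(\gamma'(s))|_{\tau(s)}\,ds+n\theta\log\frac{t_2}{t_1}+(t_2-t_1)\Big(\frac{n\theta^2C_1}{\theta-1}+n^{3/2}\theta\sqrt{C_2}\Big).
\]
Exponentiating, writing $\theta=2\epsilon$ (so that $\theta>1$ becomes $\epsilon>1/2$, the coefficient of $\log(t_2/t_1)$ becomes $2n\epsilon$, and the spatial coefficient becomes $\epsilon/2(t_2-t_1)$, matching the statement), and bounding the last bracket by $C(n,\epsilon)(C_1+\sqrt{C_2})$ with $C(n,\epsilon)=\max\{\tfrac{4n\epsilon^2}{2\epsilon-1},\,2n^{3/2}\epsilon\}$, gives the asserted inequality; minimizing over admissible $\gamma$ yields its sharp form.

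The step needing the most care is not a genuine obstacle but a regularity issue: because $u$ is only a global distributional solution, the pointwise differential Harnack inequality and the fundamental theorem of calculus along $\gamma$ have to be justified for (almost every) curve --- e.g.\ by passing to the precise Sobolev representative of $\log u$, or via a mollification and limiting argument --- and throughout one must keep the Finsler bookkeeping consistent by always evaluating $F$ in the direction of travel and in the current time slice.
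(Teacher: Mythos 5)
Your proof is correct and follows essentially the same route as the paper: integrate the differential Harnack inequality of Theorem~\ref{thm-main} along the space--time path $s\mapsto(\gamma(s),\tau(s))$, using the Finsler Cauchy--Schwarz inequality $Df(\gamma')\le F(\nabla f)F(\gamma')$ for the spatial term and then eliminating the gradient term. The only cosmetic difference is the order of operations --- the paper applies Young's inequality with parameter $\epsilon$ and then invokes the theorem with $\theta=2\epsilon$, whereas you substitute the theorem first and complete the square in $F(\nabla\log u)$; these are algebraically the same step and yield the identical intermediate bound $\frac{\theta}{4(t_2-t_1)}F^2(\gamma'(s))|_\tau+\frac{t_2-t_1}{\theta}\Phi(\tau(s))$.
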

whenever $\epsilon > \frac{1}{2}$ and for $C$, depending on $n$ and $\epsilon$ only and where the dependencies of $C_1$ and $C_2$ on our parameters are as in Theorem~\ref{thm-main}; $\gamma$ is a curve joining $x,y$ with $\gamma(1) = x$ and $\gamma(0) = y$, and $F\left( \gamma'(s) \right)|_\tau$ is the speed of $\gamma$ at time $\tau = (1-s)t_2 + st_1$.

The organization of this paper is as follows: We first briefly review some facts and results about differential Harnack estimates in Riemannian setting and about Finsler geometry in Section~\ref{sec-background}; In section~\ref{sec-estimates}, we will present lemmas and computations that we need in order to obtain a useful parabolic partial differential inequality and in the last section, we will complete the proof of our main theorem.

In this paper, we will sometimes use the abbreviations, DHE for \textit{"Differential Harnack Estimates"} and FRF for Bao's \textit{"Finsler-Ricci Flow"}






\section*{Acknowledgements}

The author would like to thank Professor K-T Sturm for giving the author the opportunity of working under his supervision as a postdoctoral fellow and for his great insights. The author also would like to thank Professor Christina Sormani for her constant support and encouragement. Many thanks go to the Stochastic Analysis group in Bonn and also my colleagues Mike Munn and Lashi Bandara.

The author is deeply grateful for valuable comments made and numerous corrections suggested by the anonymous reviewer of this paper. 






\section{Background}\label{sec-background}




\subsection{DHE estimates for heat equation in Riemannian Ricci flow}

The Ricci flow equation , $\frac{\partial g}{\partial t} = -2Ric$, was first proposed by Richard Hamilton in his seminal paper~\cite{Hamilton-Ricci-Flow}. Ricci flow is a heat type quasilinear pde but as is well-known, it enjoys a short time existence and uniqueness theory (see~\cite{Hamilton-Ricci-Flow}) and has been the key tool in proving the Poincare and Geometrization conjectures. 

The gradient estimates for solutions of parabolic equations under Ricci flow are a very important part of the Ricci flow theory. Perelman in his ground breaking work~\cite{Perelman-Entropy} proves such estimates for the conjugate heat equation which then he would benefit from, in the analysis of his $\mathcal{W}-$entropy functional. Since then there have been many important results in this direction (for both heat equation and conjugate heat equation), for example in~\cite{Kuang-Zhang}~\cite{Bailesteanu-etal}~\cite{Cao-etal-diffharnack}~\cite{Cao-Hamilton-diffharnack}~\cite{Cao-diffharnack}, to name a few. 

Since our proof, in the spirit, is closer to ones in~Liu~\cite{LIU-Grad-Est} and Sun~\cite{SUN-Grad-Est}, we will only mention their result without commenting on the other literature in this direction. Their estimates for positive solution of the heat equation under a closed Ricci flow can be stated follows\\

\textit{\textbf{Theorem [LIU~\cite{LIU-Grad-Est} , SUN~\cite{SUN-Grad-Est}] } Let $\left(M , g(t) \right) ; t \in [0,T]$  be a closed Ricci flow solution with $ - K_1 \le Ric \le K_2$ ($K_1,K_2 >0$)  along the flow. for, $u(x,t)$, a positive solution of the heat equation $\left( \Delta_{g(t)} - \partial_t \right) u(x,t) = 0$, one has the following first order gradient estimates
\be
	\frac{ \left| \nabla u(x,t) \right|^2}{u^2(x,t)} - \theta \frac{\partial_t u(x,t)}{u(x,t)} \le \frac{n\theta^2}{t} + \frac{n \theta^3 K_1}{\theta -1} + n^{\frac{3}{2}}\theta^2\left( K_1 + K_2 \right),
\ee
where, $\theta>1$.}

Their method of proof is to take $ f = \log u$ and
\be
	\alpha := t \left( \frac{ \left| \nabla u(x,t) \right|^2}{u^2(x,t)} - \theta \frac{\partial_t u(x,t)}{u(x,t)} \right) = t \left( \left| \nabla f \right|^2 - \theta \partial_t f  \right),
\ee
and apply the maximum principle to the parabolic partial differential inequality
\begin{eqnarray*}
	\left( \Delta_{g(t)} - \partial_t \right) \alpha + 2 Df \left(\nabla \alpha \right)  \ge  &&- \frac{\alpha}{t} + \frac{t}{n} \left(\left| \nabla f \right|^2 - \partial_t f   \right)^2 \\&&- 2\theta K_1 t \left| \nabla f  \right|^2 - t \theta^2 n^2 \left( K_1 + K_2  \right)^2.
\end{eqnarray*}

And this is the method that we will adopt throughout the paper.




\subsection{Finsler Structures}




\subsubsection{Finsler Metric}
Let $M$ be a $C^\infty$ connected manifold. A Finsler structure on $M$ consists of a $C^\infty$ Finsler norm $F: TM \rightarrow \R$ that satisfies the following conditions\\
\textbf{(F1)} $F$ is $C^\infty$ on $TM \setminus 0$,\\
\textbf{(F2)} $F$ restricted to the fibres is positively $1-$ homogeneous,\\
\textbf{(F3)} For any nonzero tangent vector $\mathbf{y} \in TM$, the approximated symmetric metric tensor, $g_{\mathbf{y}}$, defined by
	\be
		g_{\mathbf{y}} \left( \mathbf{u} , \mathbf{v}  \right) := \frac{1}{2} \frac{\partial^2}{\partial s \partial t} F^2\left( \mathbf{y} + s\mathbf{u} + t\mathbf{v}  \right)|_{s=t=0},
	\ee
is positive definite.




\subsubsection{Cartan Tensor}
One way to measure the non-linearity of a Finsler structure is to introduce the so called, \textbf{Cartan tensor},$C_{\mathbf{y}} : \otimes^3 TM \rightarrow \R$ defined by
	\be
			C_{\mathbf{y}} \left( \mathbf{u} , \mathbf{v} , \mathbf{w}  \right)	:= \frac{1}{2} \frac{d}{dt} \left[ g_{\mathbf{y} + t\mathbf{w}} \left( \mathbf{u} , \mathbf{v} \right) \right].
	\ee




\subsubsection{Legendre Transform}
In order to define the \textbf{gradient} of a function, we need the Legendre transform, $\mathcal{L}^* : T^*M \rightarrow TM$; For $\omega \in T^*M$, $\mathcal{L}^*(\omega)$ is the unique vector $\mathbf{y} \in TM$ such that, 
	\be
			\omega(\mathbf{y}) = F^*(\omega)^2 \;\;and\;\; F(\mathbf{y}) = F^*(\omega),
	\ee
in which, $F^*$ is the dual norm to $F$.

For a smooth function $u: M \rightarrow \R$, the gradient of $u$, $\nabla u(x)$ is defined to be $\mathcal{L}^* \left( Du(x) \right)$.




\subsubsection{Geodesic Spray and Chern Connection and Curvature Tensor}
It is straightforward to observe that the geodesic spray in the Finsler setting is of the form, $\mathbf{G} = y^i \frac{\partial}{\partial x_i} - 2G^i(x,\mathbf{y})\frac{\partial}{\partial y^i}$  where,
	\be
		G^i(x,\mathbf{y}) = \frac{1}{4}g_{\mathbf{y}}^{ik} \left\{ 2\frac{\partial (g_{\mathbf{y}})_{jk}}{\partial x_l} - \frac{\partial (g_{\mathbf{y}})_{jl}}{\partial x_k}   \right\}y^j y^l.
	\ee

The non-linear connection that we will be using in this work is the Chern connection, the connection coefficients of which is given by ($g$ in below is in fact, $g_{\mathbf{y}}$):
	\be	
		\Gamma^i_{jk} = \Gamma^i_{kj} := \frac{1}{2}g^{il} \left\{  \frac{\partial g_{lj} }{ \partial x_k } - \frac{\partial g_{jk}}{ \partial x_l} + \frac{\partial g_{kl} }{ \partial x_j } - \frac{\partial g_{lj} }{ \partial y^r } G^r_k + \frac{\partial g_{jk} }{ \partial y^r} G^r_l - \frac{\partial g_{kl} }{ \partial y^r } G^r_j\right\},
	\ee
where, $G^i_j := \frac{\partial G^i}{\partial y^j}$.

For Berwald metrics, the geodesic coefficients, $G^i$ are quadratic in terms of $y$ (by definition) which immensely simplifies the formula for connection coefficients. In fact for Berwald metrics we have $\Gamma^i_{jk} = \frac{\partial^2 G^i}{\partial y^j \partial y^k}$.

Similar to the Riemannian setting, one uses the Chern connection (and the associated covariant differentiation) to define the curvature tensor (of course depending on a nonzero vector field $V$)
	\be 
		R^V(X,Y)Z := [\nabla^V_X , \nabla^V_Y]Z - \nabla^V_{[X,Y]}Z.
	\ee




\subsubsection{Flag and Ricci Curvatures}
\textbf{Flag curvature} is defined similar to the sectional curvature in the Riemannian setting. For a fixed flag pole, $\mathbf{v} \in T_xM$	and for $\mathbf{w} \in T_xM$, the flag curvature is defined as
	\be 
	\mathcal{K^\mathbf{v}\left( \mathbf{v} , \mathbf{w} \right)} := \frac{g_{\mathbf{v}}\left( R^\mathbf{v}(\mathbf{v},\mathbf{w})\mathbf{w} , \mathbf{v} \right)}{g_{\mathbf{v}}\left(\mathbf{v}  , \mathbf{v} \right)g_{\mathbf{v}}\left(\mathbf{w}  , \mathbf{w} \right) - g_{\mathbf{v}}\left(\mathbf{v}  , \mathbf{w} \right)^2}.
	\ee	
	
The Ricci curvature is then the trace of the Flag curvature i.e.
\be
			Ric(\mathbf{v}) := F^2(\mathbf{v}) \sum_{i=1}^{n-1} \mathcal{K^\mathbf{v}\left( \mathbf{v} , \mathbf{e_i} \right)},
	\ee	
where, $\{\mathbf{e_1} , \dots , \mathbf{e_{n-1} , \frac{\mathbf{v}}{F(\mathbf{v})}} \}$ constitutes a $g_{\mathbf{v}}-$orthonormal basis of $T_xM$.




\subsubsection{Akbarzadeh's Ricci Tensor, $Ric_{ij}$}

Akbarzadeh's Ricci tensor is defined as follows
\be
	Ric_{ij} := \frac{\partial^2}{\partial y^i  \partial y^j}\left( \frac{Ric}{2}   \right).
\ee

It can be shown that the scalar Ricci curvature, $Ric$ and Akbarzadeh's Ricci tensor. $Ric_{ij}$ have the same geometrical implications. For further details regarding this tensor, see~ \cite{Bao-Robles}.




\subsubsection{$\mathbf{S}-$Curvature}\label{sec:S-curv}

Associated to any Finsler structure, there is one canonical measure, called the Busemann-Hausdorff measure, given by
\be
	dV_F := \sigma_F(x)dx_1 \wedge \dots \wedge dx_n,
\ee
where $\sigma_F(x)$ is the volume ratio
\be\label{eq-sigmaF-defn}
	\sigma_F(x) := \frac{vol \left(B_{\R^n}(1) \right)}{vol \left( \mathbf{y} \in T_xM \;|\; F(\mathbf{y}) < 1 \right)};
\ee
The set whose volume comes in the denominator of (\ref{eq-sigmaF-defn}) is called the indicatrix and there is often no known way to express its volume in terms of the equation of $F$.

The $S-$curvature, which is another measure of non-linearity, is then defined as
	\be
		\mathbf{S}(\mathbf{y}) := \frac{\partial G^i}{\partial y^i}(x , \mathbf{y}) - y^i \frac{\partial}{\partial x_i}\left( \ln \sigma_F(x) \right).
	\ee
For more details please see~\cite{Shen-S-curv} for example.




\subsubsection{Hessian, Divergence and Laplacian}
The Hessian in a Finsler structure is defined as
	\be
		Hess(u) \left(X ,Y \right) := XY(u) - \nabla_X^{\nabla u}Y (u) = g_{\nabla u} \left( \nabla_X^{\nabla u} \nabla u , Y \right).
	\ee

As usual, for a twice differentiable function $u$,
\begin{eqnarray}
	Hess(u)\left( \frac{\partial}{\partial x_i} , \frac{\partial}{\partial x_j} \right) = \frac{\partial^2 u}{\partial x_i \partial x_j}  - \Gamma_{ij}^k \frac{\partial u}{\partial x_k}
\end{eqnarray}

For a smooth measure $\mu = e^{-\Psi}dx_1 \wedge \dots \wedge dx_n$ and a vector field $V$, the divergence is defined as follows
	\be
		\operatorname{div}_\mu V:= \sum_{i=1}^{n}	\left(\frac{\partial V_i}{\partial x_i} - V_i \frac{\partial \Psi}{\partial x_i}\right).
	\ee

Now using this divergence, one can define the distributional Laplacian of a function $u \in H^1(M)$ by $\Delta u := \operatorname{div}_\mu \left( \nabla u \right)$, ie.
	\be
		\int_M \phi\Delta u d\mu := - \int_M D\phi \left(\nabla u  \right)	d\mu.	
	\ee
for $\phi \in C^\infty(M)$.	

The Finsler distributional Laplacian is non-linear but fortunately there is a way to relate it to the trace of the Hessian by adding a $\mathbf{S}-$curvature term. Indeed, one has
 	\be\label{eq-laplace-trace-Hessian}
 		\Delta u = \tr_{\nabla u} Hess(u) - \mathbf{S}\left( \nabla u \right).
	\ee
For a proof of (~\ref{eq-laplace-trace-Hessian}), see for instance~\cite{ComparisonFinsler}.







\subsection{ Weighted Ricci Curvature and Bochner-Weitzenb\"{o}ck Formula}\label{subsection-OS-BWformula}

The notion of the \textbf{weighted Ricci curvature}, $Ric_N$, of a Finsler structure equipped with a measure, $\mu$, was introduced by Ohta~\cite{Ohta-interpolation}. Take a unit vector $\mathbf{v} \in T_xM$ and let $\gamma : [-\epsilon , + \epsilon] \rightarrow M$ be a short geodesic whose velocity at time $t=0$, is $\dot{\gamma}(0) = \mathbf{v}$. Decompose the measure, $\mu$ along $\gamma$ with respect to the Riemannian volume form i.e. let $\mu = e^{-\Psi}dvol_{\dot{\gamma}}$; Then,
\begin{align}
& Ric_n(\mathbf{v}) := \begin{cases} Ric\left( \mathbf{v} \right) + \left( \Psi \circ \gamma \right)''(0) \;\; if \;\; \left( \Psi \circ \gamma \right)'(0) = 0, \\  - \infty \;\; otherwise, \end{cases} \\  & Ric_N(\mathbf{v}) := Ric\left( \mathbf{v} \right) + \left( \Psi \circ \gamma \right)''(0) - \frac{\left( \Psi \circ \gamma \right)'(0)^2}{N - n } \;\;when
;\; n < N < \infty, \\ & Ric(\mathbf{v}) := Ric\left( \mathbf{v} \right) + \left( \Psi \circ \gamma \right)''(0).
\end{align}
Also $Ric_N(\lambda \mathbf{v} ) := \lambda^2 Ric_N(\mathbf{v})$ for $\lambda \ge 0$.

It is proven in Ohta~\cite{Ohta-interpolation} that the curvature bound $Ric_N \ge K F^2$ is equivallent to Lott-Villani-Sturm's $CD(K,N)$ condition.

Using the weighted Ricci curvature bounds,  Ohta-Sturm ~\cite{Ohta-Sturm-Bochner} proved the Bochner-Weitzenb\"{o}ck formulae (both point-wise and integrated versions) for Finsler structures. For $u \in C^\infty(M)$, the point-wise version of the identity and inequality go as
\begin{align}
		& \Delta^{\nabla u} \left( \frac{F^2 \left( \nabla u \right)}{2}  \right) - D \left( \Delta u\right)\left( \nabla u  \right) = Ric_\infty\left(  \nabla u \right) +  \|  \nabla^2 u\|_{HS(\nabla u)}^2 \;\; (identity),\\
		& 		\Delta^{\nabla u} \left( \frac{F^2 \left( \nabla u \right)}{2}  \right) - D \left( \Delta u\right)\left( \nabla u  \right)  \ge  Ric_N \left(  \nabla u \right) + \frac{\left( \Delta u \right)^2}{N} \;\; (inequality).
\end{align}






\section{Estimates}\label{sec-estimates}

In this section we will gather all the required lemmas and estimates that we will need before being able to apply the maximum principle.




\subsection{Evolution of the Legendre Transform}
Since in Finsler setting the gradient is non-linear and depends on the Legendre transform we will need to know the evolution of the Legendre transform under FRF.

Let $\left(M,F \right)$ be a finsler structure evolving under Finsler-Ricci flow. Then the inverse of the Legendre transform , $(\mathcal{L}^*)^{-1}: TM  \to T^*M $ is defined as
\be
	(\mathcal{L}^*)^{-1}(x,y) = (x,p)\;\;where,\;\;p_i = g_{ij}(x,y)y^j.
\ee

To explicitly formulate the Legendre transform, for any given $\omega \in T^*_xM$, we have $\mathcal{L}^*(\omega) = y \in T_xM$, where $y$ is the unique solution to the following \textit{non-linear} system
\be
\begin{cases} g(x,y)_{11} . y^1  + \dots + g(x,y)_{1n}.y^n = \omega_1 \\ \dots \\ g(x,y)_{n1} . y^1  + \dots + g(x,y)_{nn}.y^n = \omega_n, \end{cases} 
\ee
or in the matrix form we have
\be\label{eq-legendre-defn}
	g(\mathbf{y})\mathbf{y} = \omega.
\ee

\begin{lem}
	Let $\left(M,F(t) \right)$ be a Finsler structure evolving by FRF; Then, the Legendre transform $\mathcal{L}^*: T^*M  \rightarrow TM$ satisfies
	\be
		\partial_t \mathcal{L}^* = 2Ric^i_j \mathcal{L}^*,
	\ee
i.e. for any fixed 1-form $\omega$ with $\mathcal{L}^*(\omega) = \mathbf{y} = y^i \frac{\partial}{\partial x_i} \in TM$, we have 
\be
	\partial_t y^i = 2Ric^i_ry^r,
\ee
where, $Ric^i_r := g^{ij}Ric_{jr}$.
\end{lem}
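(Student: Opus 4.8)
The plan is to differentiate the defining relation of the Legendre transform in time and exploit the vanishing of the Cartan tensor when contracted with the direction vector. Fix a $1$-form $\omega \in T^*_xM$. Being a purely cotangent object, $\omega$ does not involve the metric, so it is independent of $t$; write $\mathbf{y}(t) = y^i(t)\,\partial/\partial x_i = \mathcal{L}^*_t(\omega)$, which by \eqref{eq-legendre-defn} is characterized by
\[
	g_{ij}\big(x,\mathbf{y}(t)\big)\, y^j(t) = \omega_i \qquad \text{for all } t \in [0,T].
\]
Since $\omega$ is fixed (and, if $\omega \neq 0$, $\mathbf{y}(t) \neq 0$, so $g_{\mathbf{y}(t)}$ is defined and invertible along the flow; the case $\omega = 0$ is trivial), we may differentiate this identity in $t$. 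The function $g_{ij}(x,\cdot)$ depends on the fibre variable, so by the chain rule
\[
	\Big( (\partial_t g_{ij})(x,\mathbf{y}) + \tfrac{\partial g_{ij}}{\partial y^k}(x,\mathbf{y})\, \partial_t y^k \Big) y^j + g_{ij}(x,\mathbf{y})\, \partial_t y^j = 0,
\]
where $\partial_t g_{ij}$ means the time derivative of the metric coefficients at a \emph{fixed} point of $TM$. By Bao's flow equation the first term is $\partial_t g_{ij} = -2\,Ric_{ij}$, while $\tfrac12\,\partial g_{ij}/\partial y^k = C_{ijk}$ are the components of the Cartan tensor.

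The key observation is that $C_{ijk}\,y^j = 0$. Indeed, each $g_{ij}(x,\cdot)$ is positively $0$-homogeneous in the fibre variable, being a second fibre derivative of the $2$-homogeneous function $F^2$; Euler's relation then gives $(\partial g_{ij}/\partial y^k)\,y^k = 0$, and by the total symmetry of $C_{\mathbf{y}}$ the contraction vanishes in any slot. Hence the cross term $2\,C_{ijk}(\partial_t y^k)\,y^j$ drops out, and what remains is the linear system
\[
	g_{ij}(x,\mathbf{y})\, \partial_t y^j = 2\, Ric_{ij}(x,\mathbf{y})\, y^j .
\]
Contracting with the inverse matrix $g^{ki} = (g_{\mathbf{y}})^{ki}$ yields $\partial_t y^k = 2\, g^{ki} Ric_{ij}\, y^j = 2\, Ric^k_j\, y^j$, which is exactly the asserted pointwise formula; invariantly, $\partial_t \mathcal{L}^* = 2\, Ric^i_j\, \mathcal{L}^*$.

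The only real subtlety — the "main obstacle," such as it is — is bookkeeping: one must carefully separate the two sources of time dependence in $g_{ij}(x,\mathbf{y}(t))$, namely the explicit dependence coming from the flow of $F$ and the implicit dependence through $\mathbf{y}(t)$ itself, and then recognize that the implicit contribution is annihilated by the homogeneity identity for the Cartan tensor. Once this is seen, the rest is a one-line inversion of the metric. It is also worth noting that this computation only uses $\partial_t g_{ij} = -2\,Ric_{ij}$ and standard Finsler identities, so it makes no use of hypotheses (i) or (ii) of \thmref{thm-main}.
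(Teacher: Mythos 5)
Your proof is correct and follows essentially the same route as the paper: differentiate the defining relation $g_{\mathbf{y}}\mathbf{y}=\omega$ in $t$, substitute $\partial_t g_{ij}=-2\,Ric_{ij}$, and observe that the chain-rule contribution through $\mathbf{y}(t)$ vanishes by Euler's theorem (equivalently, $C_{ijk}y^j=0$ by $0$-homogeneity of $g_{ij}$ and total symmetry of the Cartan tensor). The paper phrases the cancellation tersely as "vanishes by Euler's theorem," whereas you spell it out via the Cartan-tensor identity, but the argument is the same.
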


\begin{proof}
	Fix $\omega$ and differentiate both sides of (\ref{eq-legendre-defn}) w.r.t. $t$ to get
	\be
	[\partial_t g(\mathbf{y})]\mathbf{y} + g(\mathbf{y}) \partial_t \mathbf{y} = 0.
\ee
Therefore, 
\be\label{eq:leg-evol1}
	\partial_t \mathbf{y} = - g(\mathbf{y})^{-1}\partial_t g(\mathbf{y})\mathbf{y},
\ee
 Expanding the RHS of (\ref{eq:leg-evol1}), for every $i$, we have
\begin{eqnarray}\label{eq:leg-evol2}
	\partial_t y^i &=& - g(\mathbf{y})^{ij}\left(\partial_t g(\mathbf{y})\right)_{jr}y^r \notag \\ &=& 2  g(\mathbf{y})^{ij}Ric_{jr}(\mathbf{y})y^r - g(\mathbf{y})^{ij}\left(\frac{\partial g_{jr}}{\partial y^k}\partial_t y^k \right)y^r \\  &=& 2Ric^i_r(\mathbf{y})y^r. \notag
\end{eqnarray}

Notice that, the second term in the second line of (\ref{eq:leg-evol2}) vanishes by Euler's theorem. 
\end{proof}
 



\subsection{Evolution of $F^2(\nabla f)$}

One crucial step in the proof of gradient estimates is to be able to estimate the evolution of the term $F^2(\nabla f)$.
\begin{lem}\label{lem:evol-norm-sq1}
Let $\left(M,F(t) \right)$ be a time dependent Finsler structure, then
\be
	\partial_t \left[ F^2 \left( \nabla f \right) \right] = 2g^{ij}\left( Df \right)[ \partial_t f]_if_j + \left[ \partial_t g^{ij}\right](Df)f_if_j.
\ee
\end{lem}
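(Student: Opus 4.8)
\emph{Proof proposal.} The plan is to reduce everything to a chain-rule computation, the one nontrivial point being a cancellation forced by the $0$-homogeneity of the fundamental tensor, so that the evolution formula for the Legendre transform from the previous lemma is not even needed.

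First I would rewrite $F^2(\nabla f)$ in a form whose $t$-dependence is transparent. Writing $\mathbf{y} := \nabla f = \mathcal{L}^*(Df)$, the (inverse) Legendre transform gives $f_i = g_{ij}(\mathbf{y})\,y^j$, hence $y^i = g^{ij}(\mathbf{y})f_j$, and therefore
\[
  F^2(\nabla f) \;=\; g_{ij}(\mathbf{y})\,y^i y^j \;=\; g^{ij}(\nabla f)\,f_i f_j ,
\]
where $g^{ij}(\nabla f) = g^{ij}(Df)$ denotes the inverse fundamental tensor of $F(t)$ evaluated in the direction $\nabla f$. This object depends on $t$ in two ways: explicitly, because the Finsler structure $F(t)$ evolves, and implicitly through $\nabla f = \nabla f(x,t)$.

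Next I would differentiate in $t$. Since $f_i = \partial f/\partial x_i$ and $f$ is regular enough to interchange the derivatives, $\partial_t f_i = \partial_{x_i}(\partial_t f) = [\partial_t f]_i$. For the coefficient the chain rule gives
\[
  \partial_t \bigl[g^{ij}(\nabla f)\bigr]
  \;=\; \bigl[\partial_t g^{ij}\bigr](\nabla f)
  \;+\; \frac{\partial g^{ij}}{\partial y^k}(\nabla f)\,\partial_t (\nabla f)^k ,
\]
where $[\partial_t g^{ij}]$ is the derivative of the tensor components at fixed $(x,\mathbf{y})$. Collecting terms and using the symmetry of $g^{ij}$,
\[
  \partial_t \bigl[F^2(\nabla f)\bigr]
  \;=\; \bigl[\partial_t g^{ij}\bigr](Df)\,f_i f_j
  \;+\; \frac{\partial g^{ij}}{\partial y^k}(\nabla f)\,\partial_t(\nabla f)^k\, f_i f_j
  \;+\; 2\, g^{ij}(Df)\,[\partial_t f]_i f_j ,
\]
so it remains only to show that the middle term vanishes.

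I expect this to be the only real obstacle, and it is Finsler-specific. Using $\partial_{y^k} g^{ij} = -\,g^{ia}g^{jb}\,\partial_{y^k} g_{ab}$ together with $g^{ia}(\nabla f) f_i = (\nabla f)^a$, the middle term becomes $-\,\partial_t(\nabla f)^k\,(\nabla f)^a (\nabla f)^b\, \partial_{y^k} g_{ab}(\nabla f)$. But the fundamental tensor is $0$-homogeneous in $\mathbf{y}$ (equivalently, the Cartan tensor is totally symmetric and satisfies $C_{\mathbf{y}}(\mathbf{y},\cdot,\cdot)=0$), so $\partial_{y^k} g_{ab}\big|_{\mathbf{y}}\, y^a = 0$ for every $\mathbf{y}$; applied with $\mathbf{y} = \nabla f$ this kills the factor $(\nabla f)^a \partial_{y^k} g_{ab}(\nabla f)$, hence the whole middle term, irrespective of the value of $\partial_t(\nabla f)^k$. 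This yields the stated identity; everything after recognizing the homogeneity cancellation is routine bookkeeping.
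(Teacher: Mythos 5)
Your proof follows essentially the same route as the paper: rewrite $F^2(\nabla f)=g^{ij}(\nabla f)f_if_j$, apply the chain rule in $t$, and observe that the term carrying $\partial_t(\nabla f)^k$ drops out by homogeneity. The paper compresses the cancellation into the phrase ``by Euler's theorem,'' whereas you spell it out via $\partial_{y^k}g^{ij}=-g^{ia}g^{jb}\partial_{y^k}g_{ab}$ and the Cartan identity $C_{\mathbf{y}}(\mathbf{y},\cdot,\cdot)=0$, which is a correct and slightly more explicit justification of the same step.
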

\begin{proof}
Simple differentiation gives
\begin{eqnarray}\label{eq:evol-norm-squared1}
		\partial_t \left[ F^2 \left( \nabla f \right) \right] &=&  \partial_t \left[ F^* \left( D f \right)^2 \right] \notag \\  &=& \partial_t \left[ g^{ij}(Df)f_if_j \right] \\ &=& 2g^{ij}\left( Df \right)[ \partial_t f]_if_j + \partial_t \left[ g^{ij}(Df) \right ]f_if_j. \notag
\end{eqnarray}
Expanding the second term of the last line in (\ref{eq:evol-norm-squared1}) , we have
\be\label{eq:evol-norm-squared2}
	\partial_t \left[ g^{ij}(Df) \right ]f_if_j =  \left[ \partial_t g^{ij} \right ] (Df)f_if_j + \frac{\partial g^{ij}}{\partial y^k} \partial_t y^k (Df)f_if_j.
\ee
Using Euler's theorem, the second term of the RHS of (\ref{eq:evol-norm-squared2}) vanishes.
\end{proof}

\begin{lem}\label{lem:evol-norm-sq2}
Suppose $F$ is evolving by Finsler-Ricci flow equation, then
\be\label{eq:evol-norm-squared3}
	\partial_t \left[ F^2 \left( \nabla f \right) \right] = 2D(\partial_t f)(\nabla f) + 2 Ric^{ij}(Df)f_if_j.
\ee
\end{lem}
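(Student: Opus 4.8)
\textbf{Proof proposal for Lemma~\ref{lem:evol-norm-sq2}.}

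The plan is to start from the formula already established in \lemref{lem:evol-norm-sq1}, namely
\be
	\partial_t \left[ F^2 \left( \nabla f \right) \right] = 2g^{ij}\left( Df \right)[ \partial_t f]_if_j + \left[ \partial_t g^{ij}\right](Df)f_if_j,
\ee
and to rewrite each of the two terms on the right-hand side into the form claimed in \eqref{eq:evol-norm-squared3}. For the first term, I would note that $[\partial_t f]_i = \partial_{x_i}(\partial_t f)$ so that $g^{ij}(Df)[\partial_t f]_i f_j$ is exactly the pairing of the one-form $D(\partial_t f)$ with the gradient vector $\nabla f = \mathcal{L}^*(Df)$, i.e. it equals $D(\partial_t f)(\nabla f)$; this uses only the definition of the Legendre transform $(\mathcal{L}^*)^{-1}(x,y)=(x,p)$, $p_i = g_{ij}y^j$, which inverts to $y^j = g^{ij}(Df)f_i$ when $p=Df$. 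So the first term contributes $2D(\partial_t f)(\nabla f)$.

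For the second term I would use the Finsler-Ricci flow equation in the form $\partial_t g_{ij} = -2 Ric_{ij}$ (Bao's flow, as recalled in the introduction), together with the standard matrix identity $\partial_t g^{ij} = - g^{ik} g^{jl} \partial_t g_{kl}$, which gives $\partial_t g^{ij} = 2 g^{ik} g^{jl} Ric_{kl} = 2 Ric^{ij}$, all evaluated at $\mathbf{y} = \nabla f$. There is a subtlety here that the paper has been careful about elsewhere (e.g.\ in the proof of the Legendre-transform evolution and in \lemref{lem:evol-norm-sq1}): the quantities $g^{ij}$ are really $g^{ij}(\mathbf{y})$ with $\mathbf{y}=\nabla f$ itself time-dependent, so differentiating $g^{ij}(Df)$ in $t$ produces an extra term $\frac{\partial g^{ij}}{\partial y^k}\,\partial_t y^k\, f_i f_j$; but this has already been shown to vanish in \eqref{eq:evol-norm-squared2} by Euler's theorem (homogeneity of $g^{ij}$ of degree $0$ in $y$ contracted against $y^i y^j$-type terms, using $y^i \propto f_i$ up to the Legendre identification). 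Hence $[\partial_t g^{ij}](Df) f_i f_j = 2 Ric^{ij}(Df) f_i f_j$, and adding the two pieces yields \eqref{eq:evol-norm-squared3}.

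The only real point requiring care — and the step I would flag as the main obstacle — is making sure the argument keeps straight which tensors are evaluated at the fixed direction $Df$ (equivalently $\nabla f$) versus which are being differentiated as functions on $TM$, so that the "Euler's theorem" cancellations are genuinely applicable; once \lemref{lem:evol-norm-sq1} is granted this is bookkeeping, since that lemma has already absorbed the hard cancellation. I would therefore keep the proof short: invoke \lemref{lem:evol-norm-sq1}, identify the first term via the Legendre transform, substitute $\partial_t g^{ij} = 2Ric^{ij}$ from the flow equation into the second term, and conclude.
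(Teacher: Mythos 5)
Your proposal is correct and follows essentially the same route as the paper: the paper's own proof is just the single observation that $\partial_t g^{ij} = 2\,Ric^{ij}$ under the flow, substituted into Lemma~\ref{lem:evol-norm-sq1}, with the identification of the first term as $2D(\partial_t f)(\nabla f)$ via the Legendre transform left implicit. You have simply spelled out both substitutions in full, including the remark that the potential $\frac{\partial g^{ij}}{\partial y^k}\partial_t y^k$ correction has already been absorbed by Euler's theorem in the preceding lemma, which matches the paper's intent.
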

\begin{proof}
It is standard to see that under Finsler-Ricci flow, 
\be
	\partial_t g^{ij} = 2 Ric^{ij},
\ee
where, as before, $Ric^{ij} := g^{ir}g^{js}Ric_{rs}$.
\end{proof}






\section{Proof of Main Theorem}

In this section we will complete the proof of our main theorem. Throughout the rest of these notes, we have a solution, $u$, of the heat equation. The Laplacian, gradient and the Legendre transform are all with respect to $V := \nabla u$ and are valid on $M_u := \{x \in M \; | \; \nabla u(x) \neq 0\}$.

Let $\sigma(t,x) = t \partial_t f(t,x)$ where, $f = \log u$. Then we have $g_{\nabla f} = g_V$. Let
\be
	\alpha(t,x):= t \left\{ F^2 \left( \nabla f(t,x) \right) - \theta \partial_t f(t,x)   \right\} = t  F^2 \left( \nabla f(t,x) \right)  - \theta \sigma.
\ee

\begin{lem}
In the sense of distributions, $\sigma(t,x)$ satisfies the parabolic differential equality,
\be\label{eq-sigma}
	\Delta \sigma - \partial_t \sigma + \frac{\sigma}{t} + 2D\sigma(\nabla f) = t \left\{ - 2 Ric^{ij}(\nabla f)  f_if_j - 2\left( Ric \right)^{kl}\left(\nabla f \right)f_{kl} \right\}.
\ee
\end{lem}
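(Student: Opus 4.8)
The plan is to differentiate $\sigma = t\,\partial_t f$ in time and apply the Finsler Laplacian to it, then combine the two using the heat equation $\partial_t u = \Delta u$ (equivalently $\partial_t f = \Delta f + F^2(\nabla f)$, since $\Delta u = u(\Delta f + F^2(\nabla f))$ for $u = e^f$) together with the Bochner--Weitzenböck identity of Ohta--Sturm from Section~\ref{subsection-OS-BWformula}. First I would record that in the sense of distributions $\partial_t f = \Delta f + F^2(\nabla f)$; differentiating this relation in $t$ gives an expression for $\partial_t(\partial_t f)$ in terms of $\partial_t(\Delta f)$ and $\partial_t[F^2(\nabla f)]$, and the latter is exactly what \lemref{lem:evol-norm-sq2} computes, namely $2D(\partial_t f)(\nabla f) + 2\,Ric^{ij}(Df)f_if_j$. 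The genuinely Finsler-specific subtlety here is that $\Delta$, $\nabla$, and the reference vector $V = \nabla u$ all depend on $t$ both through $F(t)$ and through $u(t,\cdot)$, so ``$\partial_t \Delta f$'' must be interpreted carefully; I would handle this by working on $M_u$ with the frozen reference vector and reducing everything to $\partial_t$ of the coefficient expression $\operatorname{div}_\mu(\nabla f)$, picking up the $\partial_t g^{ij} = 2Ric^{ij}$ term that accounts for the flow of the metric.

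Next I would apply the Bochner identity to $f$ with reference vector $\nabla f = \nabla u$:
\be
	\Delta^{\nabla f}\!\left(\frac{F^2(\nabla f)}{2}\right) - D(\Delta f)(\nabla f) = Ric_\infty(\nabla f) + \|\nabla^2 f\|_{HS(\nabla f)}^2 .
\ee
Since $\mathbf{S}$-curvature vanishes along the flow, the weighted and unweighted objects coincide: $\mu$ is (up to a constant) the Busemann--Hausdorff measure, $Ric_\infty(\nabla f) = Ric(\nabla f)$, and $\Delta f = \tr_{\nabla f}\nabla^2 f$ by \eqref{eq-laplace-trace-Hessian}. This lets me eliminate $D(\Delta f)(\nabla f)$ and $\Delta\big(F^2(\nabla f)\big)$ from the computation of $\Delta\sigma$. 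Combining $\Delta\sigma = t\,\Delta(\partial_t f)$ with the time derivative $\partial_t\sigma = \partial_t f + t\,\partial_t(\partial_t f)$ and substituting everything should collapse to the claimed identity \eqref{eq-sigma}, with the two curvature terms on the right-hand side being, respectively, the $2Ric^{ij}f_if_j$ contribution from the flow of the metric (via $\partial_t g^{ij}$) and the $2(Ric)^{kl}f_{kl}$ term $= 2D(\operatorname{Ric}/2)$-type contribution coming from commuting $\partial_t$ with $\Delta$ under FRF; I expect the $2D\sigma(\nabla f)$ and $\sigma/t$ terms to appear as bookkeeping from the product rule on $t\,\partial_t f$ and from the first-order term in Bochner.

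The main obstacle, and where I would spend the most care, is the distributional bookkeeping: since $u$ is only in $L^2([0,T],H^1)\cap H^1([0,T],H^{-1})$ and is not $C^2$ (indeed $C^2$ regularity would force the Riemannian case), none of these manipulations can be done pointwise. I would carry them out by testing against $\phi \in C^\infty(M)$, integrating by parts using the definition $\int_M \phi\,\Delta u\,d\mu = -\int_M D\phi(\nabla u)\,d\mu$, and using the integrated Bochner--Weitzenböck formula of Ohta--Sturm rather than the pointwise one; the linearization $\partial_t g^{ij} = 2Ric^{ij}$ is used inside the integral. One must also check that the Euler-homogeneity cancellations already exploited in \lemref{lem:evol-norm-sq1} and \lemref{lem:evol-norm-sq2} (killing the $\partial g^{ij}/\partial y^k\,\partial_t y^k$ terms) go through in the distributional setting, which they do since those are algebraic identities in the coefficients. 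Modulo this care, the identity \eqref{eq-sigma} is a formal consequence of heat equation $+$ Bochner $+$ flow equation $+$ vanishing $\mathbf{S}$-curvature, and the rest of the paper will feed it into the maximum principle.
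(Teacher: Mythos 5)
Your high-level read of the ingredients is mostly right — heat equation, \lemref{lem:evol-norm-sq2}, a commutator $[\partial_t,\Delta]$ under FRF, distributional bookkeeping — and you correctly identify that the two terms on the right of \eqref{eq-sigma} come respectively from $\partial_t[F^2(\nabla f)]$ and from the time-dependence of the Laplacian. But your plan has a genuine error: you invoke the Bochner--Weitzenb\"{o}ck identity for this lemma, and that is wrong. Look at the right-hand side of \eqref{eq-sigma}: it contains only $Ric^{ij}f_if_j$ and $Ric^{kl}f_{kl}$. There is no $Ric(\nabla f)$ and no $\|\nabla^2 f\|^2_{HS(\nabla f)}$. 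Those are precisely the two terms that Bochner manufactures, and nothing in the $\sigma$-computation produces their negatives. If you insert Bochner as you propose, you end up with extraneous terms you cannot cancel. In the paper, Bochner is used only in the \emph{next} lemma (the one for $\alpha = tF^2(\nabla f) - \theta\sigma$), because it is the $tF^2(\nabla f)$ piece, not $\sigma = t\partial_t f$, that generates $\Delta(F^2(\nabla f))$ and $D(\Delta f)(\nabla f)$; that is where $Ric(\nabla f)$ and $\|\nabla^2 f\|^2$ enter the $\mathbf{B}$-expression. If you actually carry out your own plan carefully — substitute $\partial_t f = \Delta f + F^2(\nabla f)$, expand $\partial_t\sigma$ and $\Delta\sigma$, and use the commutator — the terms $\Delta(F^2(\nabla f))$ and $D(\Delta f)(\nabla f)$ either never appear, or appear in a pair that cancels against $\partial_t(\Delta f) = \Delta(\partial_t f) + [\partial_t,\Delta]f$, leaving exactly $t\{-[\partial_t,\Delta]f - 2Ric^{ij}f_if_j\}$; there is no residual place for Bochner.

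There is also a regularity point that you gesture at but don't resolve. Writing $\Delta\sigma = t\Delta(\partial_t f)$ and then substituting $\partial_t f = \Delta f + F^2(\nabla f)$ implicitly requires $\Delta(\Delta f)$ to make sense, which it doesn't for $f \in L^2([0,T],H^1)$. The paper's proof never hits two Laplacians on $f$: the entire computation is arranged so that the spatial derivative always lands on the test function, and the one term that threatens to be second order in $f$, namely $D(t\phi)(\nabla(\partial_t f))$, is handled not by integrating by parts in space but by the identity
\be
-D(t\phi)\left(\nabla(\partial_t f)\right) = -\partial_t\left(D(t\phi)(\nabla f)\right) + D\left(\partial_t(t\phi)\right)(\nabla f) + 2Ric^{ij}(\nabla f)\,\frac{\partial(t\phi)}{\partial x_i}\frac{\partial f}{\partial x_j},
\ee
which is where the Legendre transform evolution lemma (the one preceding \lemref{lem:evol-norm-sq1}) does its work. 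The $\partial_t(D(t\phi)(\nabla f))$ piece then integrates to zero in time, $D(\partial_t(t\phi))(\nabla f)$ cancels with $\partial_t(t\phi)\Delta f$ from the heat equation after integrating by parts onto the test function, and the curvature term is exactly the one you need. Your proposal replaces this trick with ``picking up $\partial_t g^{ij} = 2Ric^{ij}$ in the coefficient expression of $\operatorname{div}_\mu(\nabla f)$,'' which is the right idea in spirit but is not enough to justify the manipulations at the stated regularity; the Legendre transform evolution and the specific pattern of where the derivatives land are what make the distributional argument close. Drop the Bochner step, incorporate the Legendre transform identity, and the argument goes through.
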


\begin{proof}
We first note that for any non-negative test function $\phi \in H^1([0,T] \times M)$ whose support is included in the domain of the local coordinate, 
\be\label{eq-sigma-1}
\partial_t \left( D(t\phi)(\nabla f) \right) = D\left( \partial_t(t \phi)\right)(\nabla f) + D(t \phi)\left(\nabla (\partial_t f) \right) + 2\left( Ric \right)^{ij}\left( \nabla f \right)\frac{\partial (t \phi)}{\partial x_i}\frac{\partial f}{\partial x_j}.
\ee
Indeed,
\begin{eqnarray}
\partial_t \left( D(t\phi)(\nabla f) \right) &=& D\left( \partial_t(t \phi)\right)(\nabla f) + D(t \phi)\left(\partial_t (\mathcal{L}^* (Df) \right) \notag \\ &=& D\left( \partial_t(t \phi)\right)(\nabla f) + D(t \phi)\Big(\partial_t (\mathcal{L}^*)(Df) + \mathcal{L}^* (D \partial_t f) \Big) \notag \\&=& D\left( \partial_t(t \phi)\right)(\nabla f) + D(t \phi)\Big(\partial_t (\mathcal{L}^*)(Df)\Big) + D(t \phi)\Big( \mathcal{L}^* (D \partial_t f) \Big) \\&=& D\left( \partial_t(t \phi)\right)(\nabla f) + D(t \phi)\left(\nabla (\partial_t f) \right) + 2g^{sj}\left( Ric \right)^i_s\left( \nabla f \right)\frac{\partial (t \phi)}{\partial x_i}\frac{\partial f}{\partial x_j}. \notag  \\&=& D\left( \partial_t(t \phi)\right)(\nabla f) + D(t \phi)\left(\nabla (\partial_t f) \right) + 2\left( Ric \right)^{ij}\left( \nabla f \right)\frac{\partial (t \phi)}{\partial x_i}\frac{\partial f}{\partial x_j}. \notag
\end{eqnarray}
That is
\be\label{eq:eq-sigma-proof-0}
	- D(t \phi)\left(\nabla (\partial_t f) \right) = - \partial_t \left( D(t\phi)(\nabla f) \right) + D\left( \partial_t(t \phi)\right)(\nabla f) + 2\left( Ric \right)^{ij}\left( \nabla f \right)\frac{\partial (t \phi)}{\partial x_i}\frac{\partial f}{\partial x_j}.
\ee

Multiplying the LHS of (\ref{eq-sigma}) by $\phi$,  integrating and then substituting (\ref{eq:eq-sigma-proof-0}), we get
\begin{eqnarray}\label{eq-sigma-proof-1}
\mathbf{A} &=& \int_0^T \int_M \left\{ - D\phi (\nabla \sigma) + \partial_t\phi . \sigma + \frac{\phi \sigma}{t} + 2\phi D\sigma(\nabla f)\right\} dm dt \notag \\ &=& \int_0^T \int_M \left\{ - D(t\phi) \left( \nabla (\partial_t f) \right) + \partial_t (t \phi) \partial_t f + 2 t \phi D (\partial_t f )(\nabla f)\right\} dm dt \notag \\ &=& \int_0^T \int_M \left\{  D \left( \partial_t (t\phi) \right) \left( \nabla f \right) + \partial_t (t \phi) \left( \Delta f + F^2(\nabla f)   \right) \right. \\ &&  \phantom{sajjadlakz} \left.  + 2\left( Ric \right)^{ij}\left( \nabla f \right)\frac{\partial (t \phi)}{\partial x_i}\frac{\partial f}{\partial x_j} + 2 t \phi D (\partial_t f )(\nabla f) \right\}  dm dt.\notag
\end{eqnarray}

Using the estimates we have obtained for $\partial_t \left[ F(\nabla f)^2  \right]$ in Lemmas \ref{lem:evol-norm-sq1} and \ref{lem:evol-norm-sq2}, we arrive at
\begin{eqnarray}\label{eq:sigma-proof-2}
\mathbf{A} &=& \int_0^T \int_M \left\{  D \left( \partial_t (t\phi) \right) \left( \nabla f \right) + \partial_t (t \phi) \left( \Delta f\right) + \partial_t (t \phi) \left(F^2(\nabla f)   \right) \right. \notag \\&&   \phantom{sajjadlakz} \left. \qquad +  2\left( Ric \right)^{ij}\left( \nabla f \right)\frac{\partial (t \phi)}{\partial x_i}\frac{\partial f}{\partial x_j} + 2 t \phi D (\partial_t f )(\nabla f) \right\}  dm dt \notag \\ &= &  \int_0^T \int_M  \left\{ \partial_t (t \phi) \left(F^2(\nabla f)  \right)+ 2\left( Ric \right)^{ij}\left( \nabla f \right)\frac{\partial (t \phi)}{\partial x_i}\frac{\partial f}{\partial x_j} + t \phi \partial_t \left[ F(\nabla f)^2 \right] \right.\\&&\phantom{sajjadlakz} \left. - 2t \phi Ric^{ij}(\nabla f)  f_if_j \right\} dmdt  \notag \\&= & \int_0^T \int_M t\phi \left\{ -2 Ric^{ij}(\nabla f)  f_if_j - 2Ric^{ij}(\nabla f)f_{ij} \right\} dmdt. \notag
\end{eqnarray}
Notice that the Euler's theorem has been used in the last line of (\ref{eq:sigma-proof-2}).
\end{proof}
Now we can proceed to compute a parabolic partial differential inequality for $\alpha(t,x)$ with a similar LHS.

\begin{lem}
	In the sense of distributions, $\alpha(t,x)$ satisfies
	\be
	\Delta^V \alpha + 2D\alpha(\nabla f) - \partial_t \alpha  + \frac{\alpha}{t}  = \mathbf{B},
\ee
where,
\begin{eqnarray}
  \mathbf{B} = \theta \left( 2t Ric^{ij}(\nabla f)f_i f_j  +  2t Ric^{kl}(\nabla f)f_{kl} \right) + 2t Ric(\nabla f) +2t \| \nabla^2 f \|_{HS(\nabla f)}^2 - 2t Ric^{ij}(\nabla f)f_i f_j .\notag
\end{eqnarray}
\end{lem}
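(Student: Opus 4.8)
The plan is to combine the distributional identity for $\sigma$ from the previous lemma with the Ohta--Sturm pointwise Bochner--Weitzenb\"ock identity applied to $f = \log u$, keeping all computations in the sense of distributions against a fixed non-negative test function $\phi$ whose support lies in a coordinate chart inside $M_u$. Since $\alpha = t F^2(\nabla f) - \theta \sigma$, I would first establish the parabolic identity for the quantity $t F^2(\nabla f)$ alone, and then take the appropriate linear combination with (\ref{eq-sigma}). The operators $\Delta^V$, $\nabla$, $\mathcal{L}^*$ are all taken with respect to $V = \nabla u = u\,\nabla f$ (note $\nabla f$ and $\nabla u$ point in the same direction so $g_{\nabla f} = g_V$, which is used freely), so the linearized Laplacian $\Delta^V$ behaves like an honest second-order elliptic operator on $M_u$ and the Bochner identity is available.

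The key steps, in order: (1) From $u_t = \Delta u$ and $f = \log u$ derive the equation $\partial_t f = \Delta f + F^2(\nabla f)$ in the distributional sense (this is where the heat equation hypothesis enters, exactly as it is already used inside the proof of the $\sigma$-lemma). (2) Compute the distributional evolution of $t F^2(\nabla f)$: apply $\partial_t$ to $t F^2(\nabla f)$, using Lemma~\ref{lem:evol-norm-sq2} to get $\partial_t [F^2(\nabla f)] = 2 D(\partial_t f)(\nabla f) + 2 Ric^{ij}(\nabla f) f_i f_j$, then substitute $\partial_t f = \Delta f + F^2(\nabla f)$ so that $D(\partial_t f)(\nabla f) = D(\Delta f)(\nabla f) + D(F^2(\nabla f))(\nabla f)$. (3) Invoke the pointwise Bochner--Weitzenb\"ock identity $\Delta^V\!\left(\tfrac{F^2(\nabla f)}{2}\right) - D(\Delta f)(\nabla f) = Ric_\infty(\nabla f) + \|\nabla^2 f\|^2_{HS(\nabla f)}$; since $\mathbf{S}$-curvature vanishes, $Ric_\infty = Ric$ and there is no weight discrepancy, so $D(\Delta f)(\nabla f) = \Delta^V\!\left(\tfrac{F^2(\nabla f)}{2}\right) - Ric(\nabla f) - \|\nabla^2 f\|^2_{HS(\nabla f)}$. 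Substituting this and assembling the drift terms $+2D\alpha(\nabla f)$ and $+\tfrac{\alpha}{t}$ should produce the claimed right-hand side $\mathbf{B}$ for $t F^2(\nabla f)$; then subtracting $\theta$ times (\ref{eq-sigma}) — whose left-hand side is already in exactly the matching form $\Delta\sigma - \partial_t\sigma + \tfrac{\sigma}{t} + 2D\sigma(\nabla f)$ — gives the stated identity for $\alpha$, with the $\theta$-multiples of the two $Ric$ terms coming from $-\theta$ times the right-hand side of (\ref{eq-sigma}).

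I expect the main obstacle to be bookkeeping the cross-terms and the drift term $2D\alpha(\nabla f)$ consistently in the distributional formulation: one must verify that the extra first-order pieces generated by moving $\partial_t$ past the Legendre transform (the $2(Ric)^{ij}(\nabla f)\tfrac{\partial(t\phi)}{\partial x_i}\tfrac{\partial f}{\partial x_j}$ type terms, handled via Euler's theorem and homogeneity as in (\ref{eq-sigma-1})) combine with $D(F^2(\nabla f))(\nabla f)$ and the $2D(\cdot)(\nabla f)$ drift exactly so as to leave no residual terms beyond those in $\mathbf{B}$. A secondary subtlety is ensuring that all the pointwise Finsler identities (Bochner, $\Delta = \tr_{\nabla u}\mathrm{Hess} - \mathbf{S}(\nabla u)$, Euler's theorem on the $y$-derivatives of $g^{ij}$) are legitimately applied under the integral sign given only the stated $L^2 H^1 \cap H^1 H^{-1}$ regularity of $u$; this is the point the introduction flags as delicate, and I would handle it exactly as the paper does — by working on $M_u$ with test functions supported there and citing the distributional Bochner inequality of Ohta--Sturm where pointwise regularity fails. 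Since at this stage we only want the \emph{equality} (the inequality bounding $\|\nabla^2 f\|^2_{HS}$ below will be applied afterwards), no estimation is needed yet; the lemma is purely an algebraic reorganization once steps (1)--(3) are in place.
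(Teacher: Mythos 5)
Your plan reproduces the paper's proof. The paper likewise multiplies the left-hand side of the $\alpha$-identity by a test function, peels off $-\theta\mathbf{A}$ (the $\sigma$-contribution from the preceding lemma), integrates $\partial_t(tF^2(\nabla f))$ by parts so that the $\tfrac{\alpha}{t}$ term absorbs the $F^2(\nabla f)$ piece, substitutes $\partial_t f = \Delta f + F^2(\nabla f)$ (via Lemma~\ref{lem:evol-norm-sq2}) so that the $2t\phi D(F^2(\nabla f))(\nabla f)$ drift cancels, and then applies the Ohta--Sturm Bochner--Weitzenb\"ock identity with $Ric_\infty = Ric$ (from $\mathbf{S}=0$) to the residual $-tD\phi(\nabla F^2(\nabla f)) - 2t\phi D(\Delta f)(\nabla f)$; the $\theta$-multiples in $\mathbf{B}$ then come from $-\theta$ times the right-hand side of (\ref{eq-sigma}), exactly as you anticipate.
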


\begin{proof}
For a non-negative test function, $\phi$, one computes
\begin{eqnarray}
	&&\int_0^T \int_M \left\{ - D\phi (\nabla \alpha) + \partial_t\phi . \alpha + \frac{\phi \alpha}{t} + 2\phi D\alpha(\nabla f)\right\} dm dt \notag \\ && \qquad = -\theta \mathbf{A} + \int_0^T \int_M \left\{ - tD\phi \left( \nabla \left( F^2(\nabla f) \right) \right) + \partial_t\phi \left(t F^2(\nabla f) \right)  \right. \notag \\ && \phantom{sajjadlakz} \left. \qquad \qquad \qquad + \phi \left( F^2(\nabla f) \right) + 2t \phi D\left( F^2(\nabla f) \right)(\nabla f)\right\} dm dt \notag \\ && \qquad = -\theta \mathbf{A} + \int_0^T \int_M \left\{ - tD\phi \left( \nabla \left( F^2(\nabla f) \right) \right) - \phi . \partial_t \left( t\left( F^2(\nabla f) \right) \right)    \right. \\ &&\phantom{sajjadlakz} \left. \qquad \qquad \qquad + \phi \left( F^2(\nabla f) \right) + 2t \phi D\left( F^2(\nabla f) \right)(\nabla f)\right\} dm dt \notag \\ && \qquad = -\theta \mathbf{A} + \int_0^T \int_M \left\{ - tD\phi \left( \nabla \left( F^2(\nabla f) \right) \right) - \phi .  \left( F^2(\nabla f) + t\partial_t \left( F^2(\nabla f) \right) \right)  \right. \notag \\ && \phantom{sajjadlakz} \left. \qquad \qquad \qquad + \phi \left( F^2(\nabla f) \right) + 2t \phi D\left( F^2(\nabla f) \right)(\nabla f)\right\} dm dt  \notag \\&& \qquad = -\theta \mathbf{A} + \int_0^T \int_M \left\{ - tD\phi \left( \nabla \left( F^2(\nabla f) \right) \right) - \phi .   t\partial_t \left( F^2(\nabla f)  \right) \right. \notag \\ && \phantom{sajjadlakz} \left. \qquad \qquad \qquad + 2t \phi D\left( F^2(\nabla f) \right)(\nabla f)\right\} dm dt, \notag
\end{eqnarray}
where, $\mathbf{A}$ is as in (\ref{eq:sigma-proof-2}).

Again, using the estimates for $\partial_t \left[ F(\nabla f)^2 \right]$ (as in Lemmas \ref{lem:evol-norm-sq1} and \ref{lem:evol-norm-sq2}), we arrive at
\begin{eqnarray}
	 &&\int_0^T \int_M \left\{ - D\phi (\nabla \alpha) + \partial_t\phi . \alpha + \frac{\phi \alpha}{t} + 2\phi D\alpha(\nabla f)\right\} dm dt \notag \\ &&\qquad = -\theta \mathbf{A} + \int_0^T \int_M \left\{ - tD\phi \left( \nabla \left( F^2(\nabla f) \right) \right) - \phi .   t\partial_t \left( F^2(\nabla f)  \right) \right. \notag  \\ && \phantom{sajjadlakz} \left. \qquad \qquad \qquad + 2t \phi D\left( F^2(\nabla f) \right)(\nabla f)\right\} dm dt \notag \\&& \qquad = -\theta \mathbf{A} + \int_0^T \int_M \left\{ - tD\phi \left( \nabla \left( F^2(\nabla f) \right) \right) - 2t\phi D(\partial_t f)(\nabla f) \right. \notag \\ && \phantom{sajjadlakz} \left. \qquad \qquad \qquad -  2t\phi Ric^{ij}(\nabla f)f_if_j + 2t \phi D\left( F^2(\nabla f) \right)(\nabla f)\right\} dm dt \\&& \qquad = -\theta \mathbf{A} + \int_0^T \int_M \left\{ - tD\phi \left( \nabla \left( F^2(\nabla f) \right) \right) - 2t\phi D(\Delta f)(\nabla f) -2t\phi  D\left(F^2(\nabla f) \right) (\nabla f) \right. \notag \\ && \phantom{sajjadlakz} \left. \qquad \qquad \qquad -   2t\phi Ric^{ij}f_if_j+ 2t \phi D\left( F^2(\nabla f) \right)(\nabla f)\right\} dm dt \notag \\&& \qquad = -\theta \mathbf{A} + \int_0^T \int_M \left\{ - tD\phi \left( \nabla \left( F^2(\nabla f) \right) \right) - 2t\phi D(\Delta f)(\nabla f)  - 2 t\phi Ric^{ij}(\nabla f)f_if_j\right\} dm dt. \notag
\end{eqnarray}

By applying the Bochner-Weitzenb\"{o}ck formula (proven in Ohta-Sturm~\cite{Ohta-Sturm-Bochner}, see also section~\ref{subsection-OS-BWformula} here) and noticing that $\mathbf{S}=0$ implies $Ric_{\infty}(\mathbf{v}) = Ric(\mathbf{v})$; we can continue as follows.
\begin{eqnarray}
 -\theta \mathbf{A} &+& \int_0^T \int_M \left\{ - tD\phi \left( \nabla \left( F^2(\nabla f) \right) \right) - 2t\phi D(\Delta f)(\nabla f)  - 2t\phi Ric^{ij}f_i f_j\right\} dm dt \notag \\& = & -\theta \mathbf{A} +  \int_0^T \int_M \phi \left\{2t Ric(\nabla f) +2t \| \nabla^2 f \|_{HS(\nabla f)}^2 - 2t Ric^{ij}(\nabla f)f_i f_j \right\}  dm dt \notag. 
\end{eqnarray}

Now, substituting $\mathbf{A}$ from (\ref{eq-sigma-proof-1}), we have
\begin{eqnarray}
  \mathbf{B} = \theta \left( 2t Ric^{ij}(\nabla f)f_i f_j  +  2t Ric^{kl}(\nabla f)f_{kl} \right) + 2t Ric(\nabla f) +2t \| \nabla^2 f \|_{HS(\nabla f)}^2 - 2t Ric^{ij}(\nabla f)f_i f_j ,\notag
\end{eqnarray}
\end{proof}

Now we have all the ingredients that we need to complete the proof of Theorem~\ref{thm-main}.

\subsection*{Proof of Theorem~\ref{thm-main}}

Assume the curvature bounds as in the statement of Theorem~\ref{thm-main}, and that the $\mathbf{S}-$curvature vanishes. The constants obtained below all depend on our curvature bounds and  ellipticity of the flow.

Lets start with $B(t,x)$,
\begin{eqnarray}
	B(t,x)  = \theta \left( 2t Ric^{ij}(\nabla f)f_i f_j  +  2t Ric^{kl}(\nabla f)f_{kl} \right) + 2t Ric(\nabla f) +2t \| \nabla^2 f \|_{HS(\nabla f)}^2 - 2t Ric^{ij}(\nabla f)f_i f_j. \notag
\end{eqnarray}

Young's inequality tells us that:
\begin{eqnarray}
	| Ric^{kl}f_{kl} | \le \frac{\theta}{2}(Ric^{kl})^2 + \frac{1}{2\theta} f_{kl}^2,
\end{eqnarray}
therefore,
\begin{eqnarray}
	2\theta t  | Ric^{kl}f_{kl} | \le  t \theta^2 (Ric^{kl})^2 + t f_{kl}^2.
\end{eqnarray}

Pick a normal coordinate system with respect to $g_{\nabla f}$ with $\nabla f(x) = \frac{\partial}{\partial x_1}$ as well as $\Gamma^1_{ij}\left( \nabla f(x) \right) = 0$ for all $i,j$. Then,
\be
	Ric^{ij}(\nabla f) = Ric_{ij}(\nabla f) \;\; and\;\; \| \nabla^2 f \|_{HS(\nabla f)}^2 = \sum f_{ij}^2 \;\; and \;\; \sum_{i=1}^n f_{ii} = \Delta f(x)
\ee
and consequently,
\begin{eqnarray}
	B(t,x) &\ge & 2t\theta Ric_{ij}(\nabla f)f_i f_j  - t \sum \theta^2 (Ric_{kl})^2 - t \sum f_{kl}^2 \notag \\ &&+ 2t Ric(\nabla f) +2t \| \nabla^2 f \|_{HS(\nabla f)}^2 - 2t Ric_{ij}(\nabla f)f_i f_j  \\ & \ge & - 2t\theta K_1 F^2(\nabla f) - 2tK_1 F^2(\nabla f) + t \sum f_{ij}^2 -t \theta^2 n^2 C_2 + 2tK_1F^2(\nabla f) . \notag
\end{eqnarray}

On the other hand, one computes
\be
	\sum f_{ij}^2 \ge \sum f_{ii}^2 \ge 1/n \left(\sum f_{ii} \right)^2 = \frac{1}{n} \left( \Delta f \right)^2.
\ee
hence,
\begin{eqnarray}
		 t \sum f_{ij}^2 \ge  \frac{t}{n} \left( \Delta f \right)^2.
\end{eqnarray}

Putting all the above estimates together, and noting that $\theta >1$, we get

\begin{eqnarray}
	B(t,x) & \ge & \frac{t}{n} \left( \Delta f \right)^2 - 2t\theta K_1 F^2(\nabla f) - 2tK_1F^2(\nabla f)  -t \theta^2 n^2 C_2 + 2tK_1F^2(\nabla f)  \notag \\ & \ge & \frac{t}{n}  \left( \Delta f \right)^2 - 2t\theta K_1F^2(\nabla f) -t \theta^2 n^2 C_2 .
\end{eqnarray}

Replacing the term, $\Delta f$ with $\left( F(\nabla f)^2 - \partial_t f \right)$, we get the inequality
\begin{eqnarray}
	B(t,x) \ge  \frac{t}{n}\left( F(\nabla f)^2 - \partial_t f \right)^2 - 2t\theta C_1F^2(\nabla f) - t\theta^2 n^2 C_2,
\end{eqnarray}
where,
\be\label{eq:constants}
	C_1 = K_1 \;\;and \;\; C_2 = \max\{K_1^2 , K_2^2  \}.
\ee
This means that,
\be\label{eq:main}
\Delta^V \alpha + 2D\alpha(\nabla f) - \partial_t \alpha \ge - \frac{\alpha}{t} + \frac{t}{n}\left( F(\nabla f)^2 - \partial_t f \right)^2 - 2t\theta C_1F^2(\nabla f) - t\theta^2 n^2 C_2. 
\ee

This inequality is exactly of the form that appears in~\cite{LIU-Grad-Est} and a computation similar to the one at the end of the proof of in~\cite[Theorem 2]{LIU-Grad-Est} (using quadratic formula and maximum principle), gives the desired result. For the sake of clarity, we will repeat the computation here.

Let
\be
	\bar{\alpha} := \alpha -  t\frac{n\theta^3 C_1}{(\theta -1)} - tn^{3/2}\theta^2 \sqrt{C_2}.
\ee
Suppose the maximum of $\bar{\alpha}$ is attained at $(x_0,t_0)$ and $\bar{\alpha} (x_0,t_0) > n\theta^2 $ (implicitly implies $t_0>0$). Therefore, at $(x_0,t_0)$, we have
\be
	0 \ge (\Delta - \partial_t) \bar{\alpha} \ge (\Delta - \partial_t) \alpha .
\ee

Let $w := F^2(\nabla f)$ and $z:= \partial_t f $, then in terms of $w$ and $z$, we have
\be
	0 \ge -\frac{\alpha}{t_0} + \frac{t_0}{n}(w -z)^2 - 2t_0 \theta C_1 w - t_0 \theta^2 n^2 C_2.
\ee
By the quadratic formula, we get
\begin{eqnarray}
	&&  \frac{t_0}{n}(w -z)^2  - 2t_0\theta C_1 w = \phantom{sajjad} \notag \\ && \phantom{sajjad}\frac{t_0}{n}\left( \frac{1}{\theta^2}(w - \theta z)^2 + \left( \frac{\theta - 1}{\theta} \right)^2 w^2 - 2\theta n C_1 w + 2 \left( \frac{\theta - 1}{\theta^2}w \right) (w - \theta z) \right)  \\ && \phantom{sajjadsajjadsajjadsajjadsaj} \ge  \frac{t_0}{n}\left( \frac{1}{\theta^2}(w - \theta z)^2 -  \frac{\theta^4 n^2 C_1^2}{(\theta -1)^2} + 2 \left( \frac{\theta - 1}{\theta^2}w \right) (w - \theta z)  \right). \notag
\end{eqnarray}
Therefore,
\begin{eqnarray}\label{eq:quadratic1}
	0 &\ge & \frac{t_0}{n \theta^2}\left( \frac{\alpha}{t_0} \right)^2 - \frac{\alpha}{t_0} - \frac{n \theta^4  C_1^2}{(\theta -1)^2}t_0  - t_0\theta^2 n^2 C_2 + \frac{2t_0}{n}\frac{\theta - 1}{\theta^2}F^2(\nabla f)\left( \frac{\alpha}{t_0} \right) \notag \\ &\ge & \frac{t_0}{n \theta^2}\left( \frac{\alpha}{t_0} \right)^2 - \frac{\alpha}{t_0} - \frac{n \theta^4  C_1^2}{(\theta -1)^2}t_0  - t_0\theta^2 n^2 C_2. 
\end{eqnarray}

Using the quadratic formula one more time, (\ref{eq:quadratic1}) implies that
\be
	\frac{\alpha}{t_0} \le \frac{n \theta^2}{t_0} + \frac{n \theta^3 C_1}{\theta - 1} + n^{\frac{3}{2}}\theta^2 \sqrt{C_2};
\ee
which in turn, implies
\be
	\bar{\alpha}(x_0 , t_0) \le n \theta^2 ,
\ee
and this is a contradiction. Therefore, 
\be
	F^2\left( \nabla \left( \log u \right) (t,x)\right) - \theta \partial_t \left( \log u \right)(t,x) \le  \frac{n \theta^2}{t} + \frac{n\theta^3 C_1}{(\theta -1)} + n^{3/2}\theta^2 \sqrt{C_2},
\ee
with $C_1$ and $C_2$ as in (\ref{eq:constants}).\\
QED.

\subsection*{Proof of Corollary~\ref{cor-global}}
From Theorem \ref{thm-main}, we know
\be
	F^2\left( \nabla \left( \log u \right) (t,x)\right) - \theta \partial_t \left( \log u \right)(t,x) \le \frac{ n \theta^2}{t} + C(n, \theta )\left(C_1 + \sqrt{C_2}  \right).
\ee

Let $l(s) := \ln u \left(\gamma(s) , \tau(s) \right) =  f\left(\gamma(s) , \tau(s) \right) $. Then,
	\begin{eqnarray}
		\frac{\partial l(s)}{\partial s} &&= \left( t_2 - t_1 \right) \left( \frac{Df \left( \dot{\gamma}(s) \right)}{t_2 - t_1} - \partial_t f \right) \notag \\ &&\le \left( t_2 - t_1 \right) \left( \frac{F\left(\nabla f \right)F\left(\dot{\gamma}\right)}{t_2 - t_1} - \partial_t f \right) \notag \\ &&\le \left( t_2 - t_1 \right)\left( \frac{\epsilon F^2\left(\dot{\gamma}\right)|_\tau}{2 \left(t_2 - t_1  \right)^2} + \frac{1}{2\epsilon}F^2\left(\nabla f \right) - \partial_t f \right) \\&&\le  \frac{\epsilon F^2\left(\dot{\gamma}\right)|_\tau}{2 \left(t_2 - t_1  \right)} + \left( t_2 - t_1 \right)\left( \frac{ 2n\epsilon}{\tau} + C(n , \epsilon) \left( C_1 + \sqrt{C_2}  \right) \right). \notag
	\end{eqnarray}
Integrating this inequality gives
\begin{eqnarray}
		\ln \frac{u\left( x,t_1 \right)}{u\left( y,t_2 \right)}&=& \int_0^1 \; \frac{\partial l(s)}{\partial s} ds \notag \\ &\le &  \int_0^1 \;\frac{\epsilon F^2\left(\dot{\gamma}\right)|_\tau }{2 \left(t_2 - t_1  \right)} ds + C(n , \epsilon)(t_2 - t_1)\left(C_1 + \sqrt{C_2} \right) + 2\epsilon n \ln \frac{t_2}{t_1}.
\end{eqnarray}

QED.

\bibliographystyle{plain}
\bibliography{reference2015}

\end{document}